\newtheorem{theorem}{Theorem}
\theoremstyle{plain}
\newtheorem{corollary}{Corollary}
\newtheorem{example}{Example}
\newtheorem{lemma}{Lemma}
\newtheorem{problem}{Problem}
\newtheorem{proposition}{Proposition}
\newtheorem{remark}{Remark}
\numberwithin{equation}{section}
\begin{document}
\title[Analytical solution of the weighted Fermat-Torricelli problem]{Analytical solution of the weighted Fermat-Torricelli problem for tetrahedra: The case of two pairs of equal weights}
\author{Anastasios N. Zachos}
\address{University of Patras, Department of Mathematics, GR-26500 Rion, Greece}
\email{azachos@gmail.com}

\keywords{weighted Fermat-Torricelli problem, weighted
Fermat-Torricelli point, tetrahedra} \subjclass{51E12, 52A10,
51E10}
\begin{abstract}
The weighted Fermat-Torricelli problem for four non-collinear and
non-coplanar points in $\mathbb{R}^{3}$ states that:

Given four non-collinear and non-coplanar points $A_{1},$ $A_{2},$
$A_{3},$ $A_{4}$ and a positive real number (weight) $B_{i}$ which
correspond to each point $A_{i},$ for $i=1,2,3,4,$ find a fifth
point such that the sum of the weighted distances to these four
points is minimized. We present an analytical solution for the
weighted Fermat-Torricelli problem for tetrahedra in
$\mathbb{R}^{3}$ for the case of two pairs of equal weights.

\end{abstract}\maketitle

\section{Introduction}

Let  $A_{1},$ $A_{2},$ $A_{3},$ $A_{4}$ be four non-collinear and
non-coplanar points and a positive real number (weight) $B_{i}$
correspond to each point $A_{i},$ for $i=1,2,3,4.$

The weighted Fermat-Torricelli problem for four non-collinear
points and non-coplanar points in $\mathbb{R}^{3}$ states
that:

\begin{problem} Find a unique (fifth) point $A_{0}\in \mathbb{R}^{3},$
which minimizes

\[f(X)=\sum_{i=1}^{4}B_{i}\|X-A_{i}\|,\]
where $\|\cdot\|$ denotes the Euclidean distance and $X\in
\mathbb{R}^{3}.$

\end{problem}

The existence and uniqueness of the weighted Fermat-Torricelli
point and a complete characterization of the solution of the
weighted Fermat-Torricelli problem for tetrahedra has been
established in \cite[Theorem~1.1, Reformulation~1.2 page~58,
Theorem~8.5 page 76, 77]{Kup/Mar:97}).

\begin{theorem}{\cite{BolMa/So:99},\cite{Kup/Mar:97},\cite{KupitzMartini:94}}\label{theor1}
Let there be given four non-collinear points and non-coplanar
points $\{A_{1},A_{2},A_{3},A_{4}\},$ $A_{1}, A_{2},
A_{3},A_{4}\in\mathbb{R}^{3}$ with corresponding positive
weights $B_{1}, B_{2}, B_{3}, B_{4}.$ \\
(a) The weighted Fermat-Torricelli point $A_{0}$ exists and is
unique. \\
(b) If for each point $A_{i}\in\{A_{1},A_{2},A_{3},A_{4}\}$

\begin{equation}\label{floatingcase}
\|{\sum_{j=1, i\ne j}^{4}B_{j}\vec u(A_i,A_j)}\|>B_i,
\end{equation}

 for $i,j=1,2,3$  holds,
 then \\
 ($b_{1}$) the weighted Fermat-Torricelli point $A_{0}$ (weighted floating equilibrium point) does not belong to $\{A_{1},A_{2},A_{3},A_{4}\}$
 and \\
 ($b_{2}$)

\begin{equation}\label{floatingequlcond}
 \sum_{i=1}^{4}B_{i}\vec u(A_0,A_i)=\vec 0,
\end{equation}
where $\vec u(A_{k} ,A_{l})$ is the unit vector from $A_{k}$ to
$A_{l},$ for $k,l\in\{0,1,2,3,4\}$
 (Weighted Floating Case).\\
 (c) If there is a point $A_{i}\in\{A_{1},A_{2},A_{3},A_{4}\}$
 satisfying
 \begin{equation}
 \|{\sum_{j=1,i\ne j}^{4}B_{j}\vec u(A_i,A_j)}\|\le B_i,
\end{equation}
then the weighted Fermat-Torricelli point $A_{0}$ (weighted
absorbed point) coincides with the point $A_{i}$ (Weighted
Absorbed Case).
\end{theorem}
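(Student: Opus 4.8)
The plan is to handle all three parts through the convex-analytic structure of the objective $f(X)=\sum_{i=1}^{4}B_{i}\|X-A_{i}\|$. For part (a), I would first establish existence by coercivity and compactness: each summand is continuous and nonnegative, and the reverse triangle inequality $\|X-A_{i}\|\ge\|X\|-\|A_{i}\|$ gives
\[
f(X)\ge\Big(\sum_{i=1}^{4}B_{i}\Big)\|X\|-\sum_{i=1}^{4}B_{i}\|A_{i}\|\longrightarrow\infty\quad\text{as }\|X\|\to\infty .
\]
A continuous coercive function on $\mathbb{R}^{3}$ attains its infimum on a sufficiently large closed ball, which yields a minimizer $A_{0}$. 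For uniqueness I would invoke strict convexity: each $\|X-A_{i}\|$ is convex, and its restriction to a line segment is affine only when $A_{i}$ lies on the line carrying that segment. If $f$ failed to be strictly convex on some nondegenerate segment, then every summand (each with positive weight) would have to be affine on it, forcing $A_{1},A_{2},A_{3},A_{4}$ onto a single line and contradicting non-collinearity. Hence $f$ is strictly convex and $A_{0}$ is unique.

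For parts (b) and (c) the key object is the one-sided directional derivative of $f$, since $f$ is smooth away from the vertices but only directionally differentiable at each $A_{i}$. Away from the vertices,
\[
\nabla f(X)=-\sum_{i=1}^{4}B_{i}\vec u(X,A_{i}),
\]
so by convexity a minimizer that is not a vertex is characterized by $\nabla f(A_{0})=\vec 0$, which is precisely \eqref{floatingequlcond}. The substantive step is deciding when the minimizer can coincide with a vertex. For a unit direction $w$ and a vertex $A_{k}$ I would compute
\[
f'(A_{k};w)=B_{k}-\sum_{j\ne k}B_{j}\langle\vec u(A_{k},A_{j}),w\rangle ,
\]
where the $B_{k}$ term arises from $\|A_{k}+tw-A_{k}\|=t$ and the remaining terms from the gradients of the smooth summands at $A_{k}$. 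By convexity, $A_{k}$ minimizes $f$ if and only if $f'(A_{k};w)\ge 0$ for every unit $w$; maximizing the linear functional $w\mapsto\sum_{j\ne k}B_{j}\langle\vec u(A_{k},A_{j}),w\rangle$ over the unit sphere produces the threshold $\|\sum_{j\ne k}B_{j}\vec u(A_{k},A_{j})\|$. Thus $A_{k}$ is the minimizer exactly when $\|\sum_{j\ne k}B_{j}\vec u(A_{k},A_{j})\|\le B_{k}$.

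With this criterion in hand, part (c) is immediate: if some $A_{i}$ satisfies $\|\sum_{j\ne i}B_{j}\vec u(A_{i},A_{j})\|\le B_{i}$, then $A_{i}$ admits no descent direction, so it is a local and hence (by convexity) a global minimizer, and uniqueness from (a) forces $A_{0}=A_{i}$. For part (b), the hypothesis \eqref{floatingcase} is the strict reversal of this threshold inequality at every vertex, so each vertex admits a direction of strict decrease and cannot be the minimizer; therefore $A_{0}$ is away from all four points, giving ($b_{1}$), and at that smooth interior point the vanishing-gradient condition \eqref{floatingequlcond} of ($b_{2}$) follows.

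I expect the main obstacle to be the nonsmooth analysis at the vertices, in particular justifying the directional-derivative formula and the maximization over $w$ that produces the norm threshold, rather than the existence and uniqueness, which follow from standard coercivity and strict-convexity arguments.
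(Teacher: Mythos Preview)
Your argument is correct and is in fact the standard convex-analytic proof of this result. Note, however, that the paper does not supply its own proof of this theorem: it is quoted as a known result from \cite{BolMa/So:99}, \cite{Kup/Mar:97}, \cite{KupitzMartini:94} and used as background for the later sections. So there is no in-paper proof to compare against; your proposal essentially reconstructs the argument one finds in those references (coercivity for existence, strict convexity from non-collinearity for uniqueness, and the subdifferential/directional-derivative test at the vertices for the dichotomy between the floating and absorbed cases).
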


We consider the following open problem:

\begin{problem}
Find an analytic solution with respect to the weighted
Fermat-Torricelli problem for tetrahedra in $\mathbb{R}^{3},$ such
that the corresponding weighted Fermat-Torricelli point is not any
of the given points.
\end{problem}

In this paper, we present an analytical solution for the weighted
Fermat-Torricelli problem for regular tetrahedra  in
$\mathbb{R}^{3}$ for $B_{1}>B_{4},$ $B_{1}=B_{2}$ and
$B_{3}=B_{4},$ by expressing the objective function as a function
of the linear segment which is formed by the middle point of the
common perpendicular $A_{1}A_{2}$ and $A_{3}A_{4},$  and the
corresponding weighted Fermat-Torricelli point $A_{0}$ (Section~2,
Theorem~\ref{theortetr}). It is worth mentioning that this
analytical solution of the weighted Fermat-Torricelli problem for
a regular tetrahedron is a generalization of the analytical
solution of the weighted Fermat-Torricelli point of a quadrangle
(tetragon) in $\mathbb{R}^{2}$ (see in \cite{Zachos:14}).

By expressing the angles $\angle A_{i}A_{0}A_{j}$ for
$i,j=1,2,3,4$ for $i\ne j$  as a function of $B_{1},$ $B_{4}$ and
$a$ and taking into account the invariance property of the
weighted Fermat-Torricelli point (geometric plasticity) in
$\mathbb{R}^{3},$ we obtain an analytical solution for some
tetrahedra having the same weights with the regular tetrahedron
(Section~3, Theorem~\ref{theorquadnntetrah}).

\section{The weighted Fermat-Torricelli problem for regular tetrahedra: The case  $B_{1}=B_{2}$ and $B_{3}=B_{4}.$ }

We shall consider the weighted Fermat-Torricelli problem for a
regular tetrahedron $A_{1}A_{2}A_{3}A_{4},$ for $B_{1}>B_{4},$
$B_{1}=B_{2}$ and $B_{3}=B_{4}.$

We denote by $a_{ij}$ the length of the linear segment $A_iA_j,$
by $A_{12}A_{34}$ the common perpendicular of $A_{1}A_{2}$ and
$A_{3}A_{4}$ where $A_{12}$ is the middle point of $A_{1}A_{2}$
and $A_{34}$ is the middle point of $A_{3}A_{4},$ by $A_{0}$ the
weighted Fermat-Torricelli point of $A_{1}A_{2}A_{3}A_{4}$ by $O$
the middle point of $A_{12}A_{34}$ ($A_{12}O=A_{34}O$), by $y$ the
length of the linear segment $OA_{0}$ and $\alpha_{ikj}$ the angle
$\angle A_{i}A_{k}A_{j}$ for $i,j,k=0,1,2,3,4, i\neq j\neq k.$ We
set $a_{ij}\equiv a,$ the edges of $A_{1}A_{2}A_{3}A_{4}$
(Fig.~\ref{fig1}).

\begin{figure}
\centering
\includegraphics[scale=0.2]{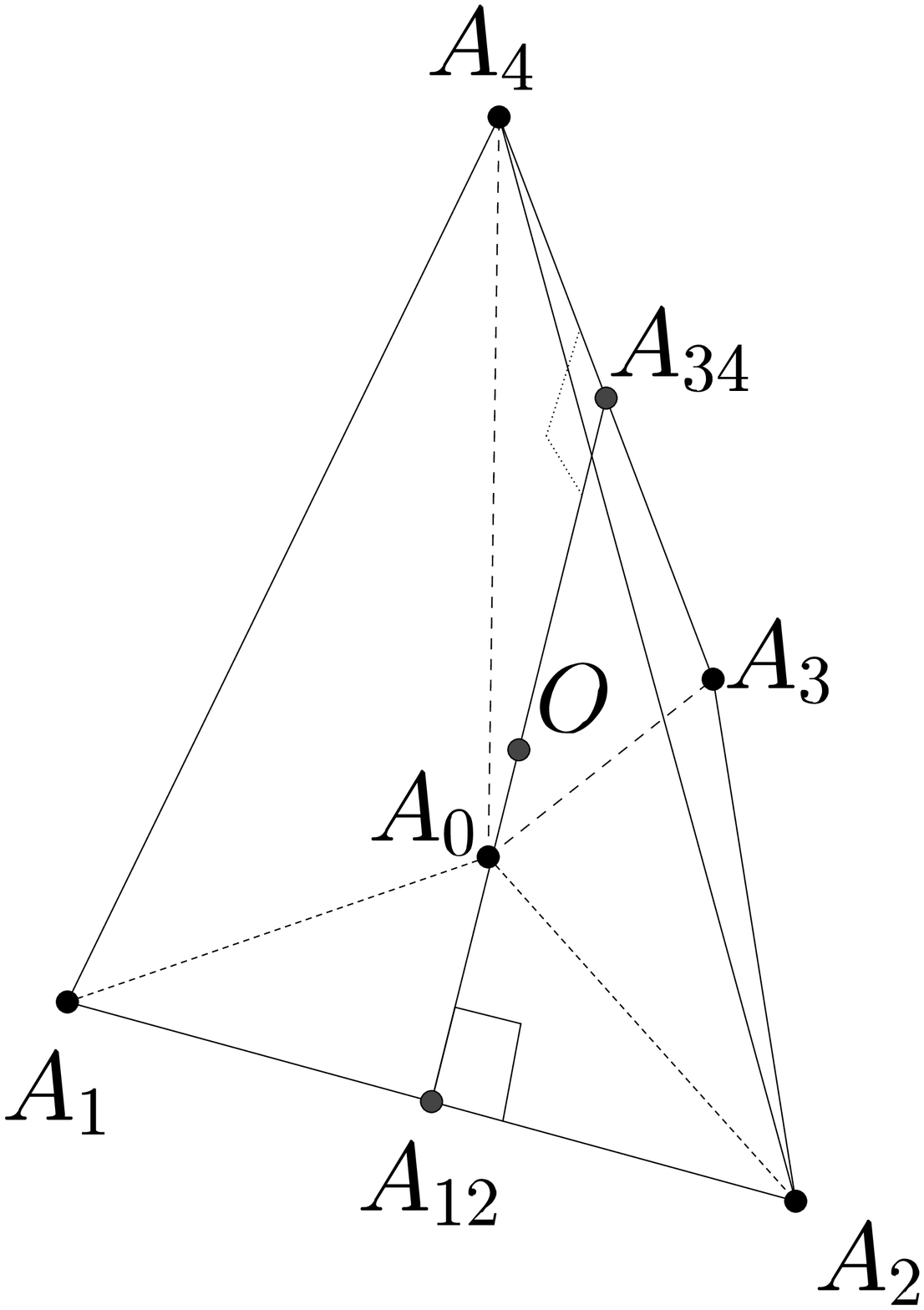}
\caption{The weighted Fermat-Torricelli problem for a regular
tetrahedron $B_{1}=B_{2}$ and $B_{3}=B_{4}$ for
$B_{1}>B_{4}$}\label{fig1}
\end{figure}

\begin{problem}\label{sym1}
Given a regular tetrahedron $A_{1}A_{2}A_{3}A_{4}$ and a weight
$B_{i}$ which corresponds to the vertex $A_{i},$ for $i=1,2,3,4,$
find a fifth point $A_{0}$ (weighted Fermat-Torricelli point)
which minimizes the objective function

\begin{equation}\label{obj1}
f=B_{1}a_{01}+B_{2} a_{02}+ B_{3} a_{03}+B_{4} a_{04}
\end{equation}
for $B_{1}>B_{4},$ $B_{1}=B_{2}$ and $B_{3}=B_{4}.$
\end{problem}

We set

\begin{eqnarray}\label{analsolrtetrahedron1}
&&s\equiv -a^6 B_1^{12}+2 a^6 B_1^{10} B_4^2+a^6 B_1^8 B_4^4-4 a^6
B_1^6 B_4^6+a^6 B_1^4 B_4^8+2 a^6 B_1^2 B_4^{10}-a^6
B_4^{12}+2 \sqrt{2}\nonumber{}\\
&&{}\surd(a^{12} B_1^{22} B_4^2-8 a^{12} B_1^{20} B_4^4+29 a^{12}
B_1^{18} B_4^6-64 a^{12} B_1^{16} B_4^8+98 a^{12} B_1^{14}
B_4^{10}-112 a^{12} B_1^{12} B_4^{12}+\nonumber{}\\
&&{}+98 a^{12} B_1^{10} B_4^{14}-64 a^{12} B_1^8 B_4^{16}+29
a^{12} B_1^6 B_4^{18}-8 a^{12} B_1^4 B_4^{20}+a^{12} B_1^2
B_4^{22})
\end{eqnarray}

and

\begin{eqnarray}\label{analsolrtetrahedron2}
t\equiv -\frac{a^4 B_1^4}{4 s^{1/3}}+\frac{a^4 B_1^2 B_4^2}{2
s^{1/3}}-\frac{a^4 B_4^4}{4 s^{1/3}}-\frac{s^{1/3}}{4
\left(B_1^4-2 B_1^2 B_4^2+B_4^4\right)}.
\end{eqnarray}


\begin{theorem}\label{theortetr}
The location of the weighted Fermat-Torricelli point $A_{0}$ of
$A_{1}A_{2}A_{3}A_{4}$ for $B_{1}=B_{2},$ $B_{3}=B_{4}$ and
$B_{1}>B_{4}$ is given by:

\begin{eqnarray}\label{analsolrtetrahedron}
&&y=-\frac{\sqrt{t}}{2}+\nonumber{}\\
&&{}\frac{1}{2} \sqrt{\frac{a^4 B_1^4}{4 s^{1/3}}-\frac{a^4 B_1^2
B_4^2}{2 s^{1/3}}+\frac{a^4 B_4^4}{4 s^{1/3}}+\frac{2 \left(-8
\sqrt{2} a^3 B_1^2-8 \sqrt{2} a^3 B_4^2\right)}{\sqrt{t} \left(64
B_1^2-64 B_4^2\right)}+\frac{s^{1/3}}{4 \left(B_1^4-2 B_1^2
B_4^2+B_4^4\right)}}\nonumber{}\\
\end{eqnarray}

\end{theorem}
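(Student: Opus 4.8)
The plan is to use the symmetry imposed by $B_{1}=B_{2}$ and $B_{3}=B_{4}$ to collapse the three-dimensional minimization to a single-variable problem along the common perpendicular, and then to solve the resulting algebraic equation in closed form by Ferrari's method. First I would observe that the objective function $f$ of \eqref{obj1} is invariant under the two reflections of $\mathbb{R}^{3}$ that interchange $A_{1}\leftrightarrow A_{2}$ (fixing $A_{3},A_{4}$) and $A_{3}\leftrightarrow A_{4}$ (fixing $A_{1},A_{2}$); these are the reflections across the perpendicular bisecting planes of the edges $A_{1}A_{2}$ and $A_{3}A_{4}$, and both planes contain the line $A_{12}A_{34}$. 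By the uniqueness in Theorem~\ref{theor1}(a), the minimizer $A_{0}$ is fixed by every symmetry of $f$, so it lies in the intersection of these two planes, i.e.\ on the common perpendicular $A_{12}A_{34}$. This legitimizes describing $A_{0}$ by the single scalar $y=OA_{0}$.

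Next I would place $O$ at the origin with $A_{12}A_{34}$ along one axis, so that $A_{12},A_{34}$ sit at distance $\tfrac{a}{2\sqrt{2}}$ from $O$ (half the common-perpendicular length $a/\sqrt{2}$) and each vertex lies at distance $a/2$ from its midpoint in a direction orthogonal to the axis. A direct computation then gives $a_{01}=a_{02}=\sqrt{a^{2}/4+(\tfrac{a}{2\sqrt{2}}-y)^{2}}$ and $a_{03}=a_{04}=\sqrt{a^{2}/4+(\tfrac{a}{2\sqrt{2}}+y)^{2}}$, so that $f$ becomes a strictly convex function of $y$. By strict convexity its unique stationary point is the global minimizer, characterized by $f'(y)=0$, which is exactly the floating equilibrium condition \eqref{floatingequlcond} projected onto the axis. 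Since $B_{1}>B_{4}$ pulls $A_{0}$ toward $A_{12}$, one checks $f'(0)<0$ while $f'(\tfrac{a}{2\sqrt{2}})>0$, so the minimizer satisfies $0<y<\tfrac{a}{2\sqrt{2}}$, confirming the weighted floating case.

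I would then clear the two radicals in $f'(y)=0$ by squaring and cross-multiplying. The equation has no cubic term by the $\pm\tfrac{a}{2\sqrt{2}}$ pairing, and the crucial simplification is that the regular-tetrahedron identity $(\tfrac{a}{2\sqrt{2}})^{2}=a^{2}/8$ forces the quadratic coefficient to cancel as well, collapsing everything to a depressed quartic $y^{4}+qy+r=0$ with $q=-\tfrac{\sqrt{2}\,a^{3}(B_{1}^{2}+B_{4}^{2})}{8(B_{1}^{2}-B_{4}^{2})}$ and $r=\tfrac{3a^{4}}{64}$. Ferrari's reduction yields the resolvent cubic $m^{3}-rm-q^{2}/8=0$, whose Cardano root produces precisely the cube root $s^{1/3}$ of \eqref{analsolrtetrahedron1}; back-substitution gives $t=2m$ as in \eqref{analsolrtetrahedron2}, and the accompanying quadratic $y^{2}+\sqrt{t}\,y+\big(m-\tfrac{q}{2\sqrt{t}}\big)=0$ has discriminant $-t-\tfrac{\sqrt{2}\,a^{3}(B_{1}^{2}+B_{4}^{2})}{4(B_{1}^{2}-B_{4}^{2})\sqrt{t}}$, whence \eqref{analsolrtetrahedron}.

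Finally, among the roots produced I would retain the one lying in $(0,\tfrac{a}{2\sqrt{2}})$, discarding both the spurious roots introduced by squaring and the roots outside the admissible interval; the sign choices $-\tfrac{\sqrt{t}}{2}+\tfrac{1}{2}\sqrt{\cdots}$ in the statement single out exactly this minimizer. I expect the main obstacle to be the algebraic bookkeeping in the Ferrari--Cardano step: matching the degree-$12$ discriminant under the inner radical of $s$ and fixing the branch and sign conventions so that the retained root is real, positive, and genuinely the convex minimizer rather than an artifact of squaring.
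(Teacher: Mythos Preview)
Your proposal is correct and follows essentially the same route as the paper: exploit the two reflection symmetries to force $A_{0}$ onto the segment $A_{12}A_{34}$, express the distances $a_{0i}$ as functions of the single parameter $y$, differentiate, square to clear radicals, obtain the depressed quartic $64(B_1^{2}-B_4^{2})y^{4}-8\sqrt{2}\,a^{3}(B_1^{2}+B_4^{2})y+3a^{4}(B_1^{2}-B_4^{2})=0$, and extract the relevant real root. The only differences are expository: you justify the symmetry reduction via uniqueness in Theorem~\ref{theor1} and argue convexity and the sign of $f'$ to pin down the admissible interval, and you carry out Ferrari--Cardano explicitly where the paper simply cites \cite{Shmakov:11} for the quartic formula.
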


\begin{proof}[Proof of Theorem~\ref{theortetr}:]

Taking into account the symmetry of the weights $B_{1}=B_{4}$ and
$B_{2}=B_{3}$ for $B_{1}>B_{4}$ and the symmetries of the regular
tetrahedron $A_{1}A_{2}A_{3}A_{4}$ the objective function
(\ref{obj1}) of the weighted Fermat-Torricelli problem
(Problem~\ref{sym1}) could be reduced to an equivalent Problem:
Find a point $A_{0}$ which belongs to the midperpendicular
$A_{12}A_{34}$ of $A_{1}A_{2}$ and $A_{3}A_{4}$ and minimizes the
objective function

\begin{equation}\label{obj12}
\frac{f}{2}=B_{1}a_{01}+B_{4} a_{04}.
\end{equation}

We express $a_{01},$ $a_{02},$ $a_{03}$ and $a_{04}$ as a function
of $y:$

\begin{equation}\label{a01}
a_{01}^{2}=\left(\frac{a}{2}\right)^{2}+\left(\frac{a\frac{\sqrt{2}}{2}}{2}-y\right)^{2},
\end{equation}

\begin{equation}\label{a02}
a_{02}^{2}=\left(\frac{a}{2}\right)^{2}+\left(\frac{a\frac{\sqrt{2}}{2}}{2}-y\right)^{2},
\end{equation}

\begin{equation}\label{a03}
a_{03}^{2}=\left(\frac{a}{2}\right)^{2}+\left(\frac{a\frac{\sqrt{2}}{2}}{2}+y\right)^{2},
\end{equation}

\begin{equation}\label{a04}
a_{04}^{2}=\left(\frac{a}{2}\right)^{2}+\left(\frac{a\frac{\sqrt{2}}{2}}{2}+y\right)^{2},
\end{equation}

where the length of $A_{12}A_{34}$ is $\frac{a\sqrt{2}}{2}.$

By replacing (\ref{a01}) and (\ref{a04}) in (\ref{obj12}) we get:

\begin{equation}\label{obj13}
B_{1}\sqrt{\left(\frac{a}{2}\right)^{2}+\left(\frac{a\frac{\sqrt{2}}{2}}{2}-y\right)^{2}}+B_{4}\sqrt{\left(\frac{a}{2}\right)^{2}+\left(\frac{a\frac{\sqrt{2}}{2}}{2}+y\right)^{2}}
\to min.
\end{equation}

By differentiating (\ref{obj13}) with respect to $y,$ and by
squaring both parts of the derived equation, we get:

\begin{equation}\label{fourth1}
\frac{B_{1}^2 \left(\frac{a\frac{\sqrt{2}}{2}}{2}-y\right)^{2}
}{\left(\frac{a}{2}\right)^{2}+\left(\frac{a\frac{\sqrt{2}}{2}}{2}-y\right)^{2}}=\frac{B_{4}^2
\left(\frac{a\frac{\sqrt{2}}{2}}{2}+y\right)^{2}}{\left(\frac{a}{2}\right)^{2}+\left(\frac{a\frac{\sqrt{2}}{2}}{2}+y\right)^{2}}
\end{equation}

which yields

\begin{equation}\label{fourth2}
64 y^4 \left(B_1^2-B_4^2\right)-8 \sqrt{2} a^3 y
\left(B_1^2+B_4^2\right)+3 a^4\left(B_1^2-B_4^2\right)=0.
\end{equation}

By solving the fourth order equation (\ref{fourth2}) with respect
to $y$ , we derive two complex solutions and two real solutions
(see in \cite{Shmakov:11} for the general solution of a fourth
order equation with respect to $y$) which depend on $B_{1}, B_{4}$
and $a.$ One of the two real solutions with respect to $y$ is
(\ref{analsolrtetrahedron}). The real solution
(\ref{analsolrtetrahedron}) gives the location of the weighted
Fermat-Torricelli point $A_{0}$ at the interior of
$A_{1}A_{2}A_{3}A_{4}$ (see fig.~\ref{fig1}).

\end{proof}


We shall state the Complementary weighted Fermat-Torricelli
problem for a regular tetrahedron (\cite[pp.~358]{Cour/Rob:51}),
in order to explain the second real solution which have been
obtained by (\ref{fourth2}) with respect to $y$ (see also in
\cite{Zachos:14} for the case of a quadrangle).

\begin{problem}\label{sym2complementary}
Given a regular tetrahedron $A_{1}A_{2}A_{3}A_{4}$ and a weight
$B_{i}$ (a positive or negative real number) which corresponds to
the vertex $A_{i},$ for $i=1,2,3,4,$ find a fifth point $A_{0}$
(weighted Fermat-Torricelli point) which minimizes the objective
function

\begin{equation}\label{obj1}
f=B_{1}a_{01}+B_{2} a_{02}+ B_{3} a_{03}+B_{4} a_{04}
\end{equation}
for $\|B_{1}\|>\|B_{4}\|,$ $B_{1}=B_{2}$ and $B_{3}=B_{4}.$
\end{problem}

\begin{proposition}\label{theortetrcomp1}
The location of the complementary weighted Fermat-Torricelli point
$A_{0}^{\prime}$ (solution of Problem~\ref{sym2complementary}) of
the regular tetrahedron $A_{1}A_{2}A_{3}A_{4}$ for
$B_{1}=B_{2}<0,$ $B_{3}=B_{4}<0$ and $\|B_{1}\|>\|B_{4}\|$ is the
exactly same with the location of the corresponding weighted
Fermat-Torricelli point $A_{0}$ of $A_{1}A_{2}A_{3}A_{4}$ for
$B_{1}=B_{2}>0,$ $B_{3}=B_{4}>0$ and $\|B_{1}\|>\|B_{4}\|.$

\end{proposition}

\begin{proof}[Proof of Proposition~\ref{theortetrcomp1}:]
Taking into account theorem~\ref{theortetr}, for  $B_{1}=B_{2}<0,$
$B_{3}=B_{4}<0$ we derive:

\begin{equation}\label{compl1}
\vec{B_{1}}+\vec{B_{2}}+\vec{B_{3}}+\vec{B_{4}}=\vec{0}
\end{equation}

or

\begin{equation}\label{compl2}
(-\vec{B_{1}})+(-\vec{B_{2}})+(-\vec{B_{3}})+(-\vec{B_{4}})=\vec{0}.
\end{equation}

From (\ref{compl1}) and (\ref{compl2}), we derive that the
complementary weighted Fermat-Torricelli point $A_{0}^{\prime}$
coincides with the weighted Fermat-Torricelli point $A_{0}.$ We
note that the vectors $\vec{B}_{i}$ may change direction from
$A_{i}$ to $A_{0},$ simultaneously, for $i=1,2,3,4.$

\end{proof}

\begin{proposition}\label{theortetrcomp2}
The location of the complementary weighted Fermat-Torricelli point
$A_{0}^{\prime}$ (solution of Problem~\ref{sym2complementary}) of
the regular tetrahedron $A_{1}A_{2}A_{3}A_{4}$ for
$B_{1}=B_{2}<0,$ $B_{3}=B_{4}>0$ or $B_{1}=B_{2}>0,$
$B_{3}=B_{4}<0$ and $\|B_{1}\|>\|B_{4}\|$ is given by:

\begin{eqnarray}\label{analsoltetrahedrcom}
&&y=\frac{\sqrt{t}}{2}+\nonumber\\
&&{}+\frac{1}{2} \sqrt{\frac{a^4 B_1^4}{4 s^{1/3}}-\frac{a^4 B_1^2
B_4^2}{2 s^{1/3}}+\frac{a^4 B_4^4}{4 s^{1/3}}-\frac{2 \left(-8
\sqrt{2} a^3 B_1^2-8 \sqrt{2} a^3 B_4^2\right)}{\sqrt{t} \left(64
B_1^2-64 B_4^2\right)}+\frac{s^{1/3}}{4 \left(B_1^4-2 B_1^2
B_4^2+B_4^4\right)}}\nonumber\\
\end{eqnarray}

\end{proposition}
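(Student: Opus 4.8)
The plan is to reduce Problem~\ref{sym2complementary} in the mixed-sign case to the same stationarity condition already treated in the proof of Theorem~\ref{theortetr}, and then to identify which of the two real roots of the resulting quartic is picked out by the signs of the weights. First I would exploit the symmetries $B_{1}=B_{2}$, $B_{3}=B_{4}$ exactly as before: restricting the objective to the common perpendicular $A_{12}A_{34}$ collapses $f$ to $\frac{f}{2}=B_{1}a_{01}+B_{4}a_{04}$, with $a_{01}$ and $a_{04}$ given by (\ref{a01}) and (\ref{a04}) as functions of $y$. Writing $c=\frac{a\sqrt{2}}{4}$ and $d=\frac{a}{2}$, the first-order condition reads $\frac{B_{1}(c-y)}{a_{01}}=\frac{B_{4}(c+y)}{a_{04}}$, and squaring it reproduces (\ref{fourth1}) verbatim, since squaring discards the signs of $B_{1}$ and $B_{4}$. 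Hence the mixed-sign complementary problem yields the identical quartic (\ref{fourth2}), regardless of whether $B_{1}=B_{2}<0<B_{3}=B_{4}$ or the reverse.

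Next I would invoke the root structure of (\ref{fourth2}). Dividing by $64(B_{1}^{2}-B_{4}^{2})$ gives a depressed quartic $y^{4}+qy+r=0$ with vanishing cubic and quadratic terms, where $q=-\frac{\sqrt{2}\,a^{3}(B_{1}^{2}+B_{4}^{2})}{8(B_{1}^{2}-B_{4}^{2})}$ and $r=\frac{3a^{4}}{64}$. Ferrari's factorization $y^{4}+qy+r=(y^{2}+\alpha y+\beta)(y^{2}-\alpha y+\gamma)$ forces $\beta+\gamma=\alpha^{2}$, $\alpha(\gamma-\beta)=q$, $\beta\gamma=r$, so that $\alpha^{2}$ is the real root of the resolvent cubic $u^{3}-4ru-q^{2}=0$, whose Cardano expression is precisely the quantity $t$ of (\ref{analsolrtetrahedron2}) built from $s$ of (\ref{analsolrtetrahedron1}); thus $\alpha=\sqrt{t}$. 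The two real roots are $\frac{-\alpha+\sqrt{\alpha^{2}-4\beta}}{2}$ from the first factor and $\frac{\alpha+\sqrt{\alpha^{2}-4\gamma}}{2}$ from the second. Since $\alpha^{2}-4\gamma$ and $\alpha^{2}-4\beta$ differ by $4(\gamma-\beta)=4q/\sqrt{t}$, these are exactly (\ref{analsolrtetrahedron}) and (\ref{analsoltetrahedrcom}): the complementary formula is obtained from the Theorem~\ref{theortetr} formula by replacing $-\frac{\sqrt{t}}{2}$ with $+\frac{\sqrt{t}}{2}$ and reversing the sign of the cross term under the inner radical.

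It remains to argue that the mixed-sign problem selects the second root. For this I would return to the unsquared condition $\frac{B_{1}(c-y)}{a_{01}}=\frac{B_{4}(c+y)}{a_{04}}$, whose two sides must share a sign. In the standard case $B_{1},B_{4}>0$ this forces $(c-y)$ and $(c+y)$ to agree, i.e.\ $|y|<c$, giving the interior root (\ref{analsolrtetrahedron}); when $B_{1}$ and $B_{4}$ have opposite signs the factors $(c-y)$ and $(c+y)$ must have opposite signs, forcing $|y|>c$. The branch $y=\frac{\alpha+\sqrt{\alpha^{2}-4\gamma}}{2}$ is precisely the one lying outside $[-c,c]$, and it coincides with (\ref{analsoltetrahedrcom}); combined with the factorization identification this completes the argument. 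The main obstacle I expect is the symbolic bookkeeping of the second paragraph: verifying that the Cardano root of $u^{3}-4ru-q^{2}=0$ is literally $t$ as written, and that $\alpha^{2}-4\beta$ and $\alpha^{2}-4\gamma$ reproduce the two inner radicands exactly, including the factor $2$ on the cross term and the $(64B_{1}^{2}-64B_{4}^{2})$ denominator. This is routine but delicate.
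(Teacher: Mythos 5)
Your overall route is the same as the paper's: reduce by symmetry to the common perpendicular, observe that squaring the stationarity condition produces the quartic (\ref{fourth2}) irrespective of the signs of the weights, and identify (\ref{analsoltetrahedrcom}) as the second real root. Your sign analysis of the unsquared equation (opposite signs of $B_{1},B_{4}$ force $(c-y)(c+y)<0$, i.e.\ $\abs{y}>c=\frac{a\sqrt{2}}{4}$, hence a stationary point outside the tetrahedron) is in fact an improvement, since the paper merely asserts that ``the second real solution'' is the relevant one and that it lies outside.

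However, your second paragraph contains a genuine error in the root structure, and it propagates into your selection step. With $q<0$ (which always holds here, since $B_{1}>B_{4}>0$) and $\alpha=\sqrt{t}>0$, Ferrari's conditions give $\gamma-\beta=q/\alpha<0$, so $\beta>\gamma$; moreover $(\alpha^{2}-4\beta)+(\alpha^{2}-4\gamma)=-2\alpha^{2}<0$, so the discriminant $\alpha^{2}-4\beta$ of the first factor is necessarily negative. Hence the first factor $y^{2}+\alpha y+\beta$ contributes the two \emph{complex} roots, and both real roots of (\ref{fourth2}) come from the second factor: they are $\frac{\alpha\pm\sqrt{\alpha^{2}-4\gamma}}{2}$. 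Your claim that the real roots are distributed one per factor, with the interior (positive-weight) root equal to $\frac{-\alpha+\sqrt{\alpha^{2}-4\beta}}{2}$, is false under the real-radical reading: that expression is not real. (For $a=1$, $B_{1}=2.5$, $B_{4}=1$ one finds $s<0$, $t\approx 0.545$, $2q/\sqrt{t}\approx -0.661$, so $-t+2q/\sqrt{t}<0$; the printed formula (\ref{analsolrtetrahedron}) returns the interior root $0.198$ only if $s^{1/3}$ and the nested square roots are taken as principal complex branches, with imaginary parts cancelling.) Consequently your argument that the mixed-sign case selects $\frac{\alpha+\sqrt{\alpha^{2}-4\gamma}}{2}$ because the ``other'' real root sits inside $[-c,c]$ in the first factor is vacuous as written. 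The repair is short and keeps your sign argument intact: the two real roots are $\frac{\alpha\pm\sqrt{\alpha^{2}-4\gamma}}{2}$; the minus branch is the stationary point of the positive-weight problem, which by your own sign argument lies in $(-c,c)$; the mixed-sign objective is coercive because $\abs{B_{1}}>\abs{B_{4}}$, so its minimizer is a stationary point with $\abs{y}>c$ and must therefore be the plus branch, which is exactly (\ref{analsoltetrahedrcom}). With that correction your proof is complete and, unlike the paper's, actually justifies the root selection.
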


\begin{proof}[Proof of Proposition~\ref{theortetrcomp2}:]

Considering (\ref{obj13}) for $B_{1}=B_{2}<0,$ $B_{3}=B_{4}>0$ or
$B_{1}=B_{2}>0,$ $B_{3}=B_{4}<0$ and $\|B_{1}\|>\|B_{4}\|$ and
differentiating (\ref{obj13}) with respect to $y\equiv
OA_{0}^{\prime},$ and by squaring both parts of the derived
equation, we obtain (\ref{fourth2}) which is a fourth order
equation with respect to $y.$ The second real solution of $y$
gives (\ref{analsoltetrahedrcom}). Taking into account the real
solution (\ref{analsoltetrahedrcom}) and the weighted floating
equilibrium condition
$\vec{B_{1}}+\vec{B_{2}}+\vec{B_{3}}+\vec{B_{4}}=\vec{0}$ we
obtain that the complementary weighted Fermat-Torricelli point
$A_{0}^{\prime}$ for $B_{1}=B_{2}<0,$ $B_{3}=B_{4}>0$ coincides
with the complementary weighted Fermat-Torricelli point
$A_{0}^{\prime\prime}$ for $B_{1}=B_{2}>0,$ $B_{3}=B_{4}<0$ (
Fig.~\ref{fig4} ). From (\ref{analsoltetrahedrcom}), we derive
that the complementary $A_{0}^{\prime}$ is located outside the
regular tetrahedron $A_{1}A_{2}A_{3}A_{4}$ (Fig.~\ref{fig4}).

\begin{figure}
\centering
\includegraphics[scale=0.2]{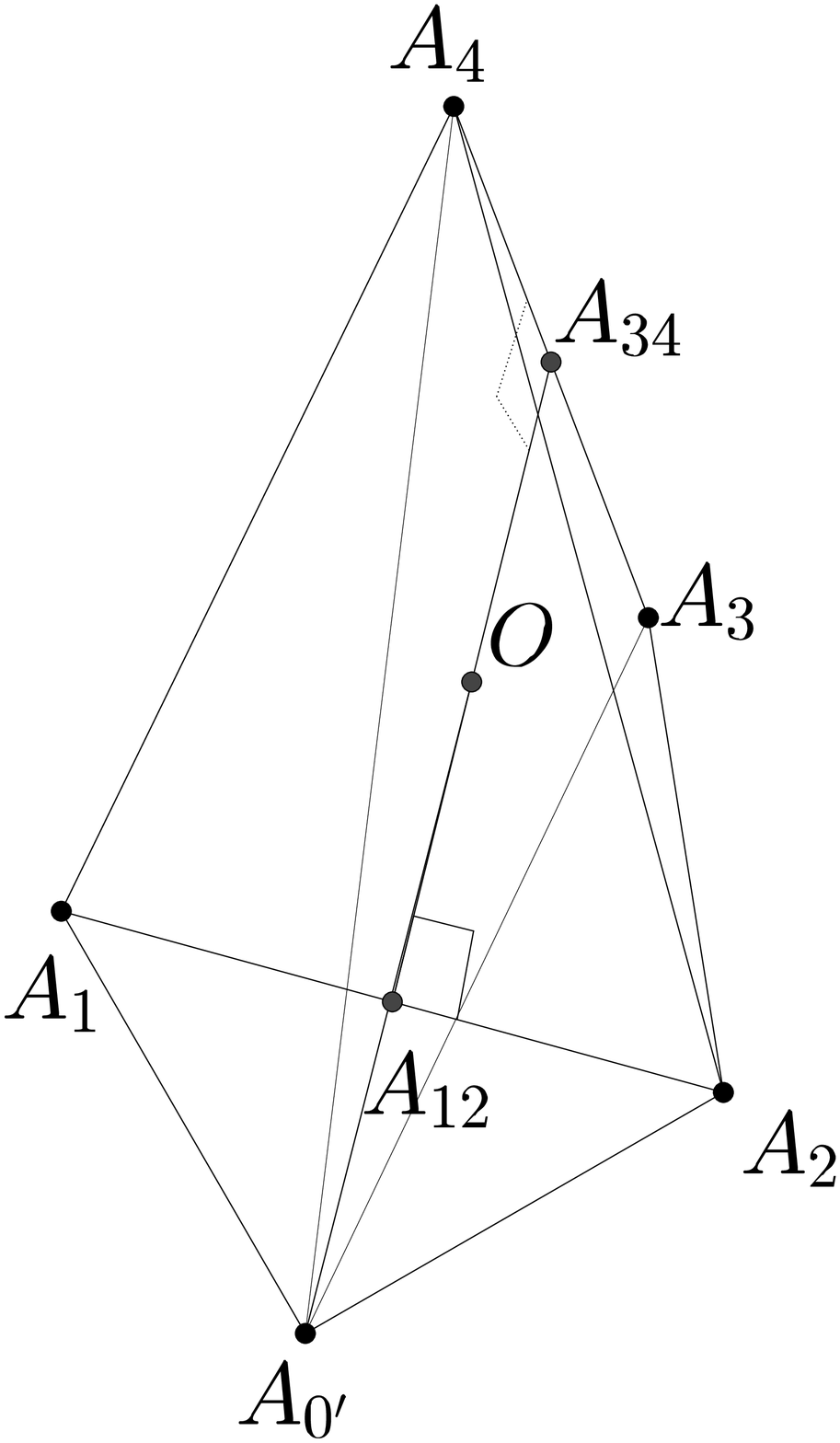}
\caption{The complementary weighted Fermat-Torricelli point
$A_{0}^{\prime}$ of a regular tetrahedron $A_{1}A_{2}A_{3}A_{4}$
for $B_{1}=B_{2}>0$ and $B_{3}=B_{4}<0$ or $B_{1}=B_{2}<0$ and
$B_{3}=B_{4}>0$  for $\|B_{1}\|>\|B_{4}\|$}\label{fig4}
\end{figure}

\end{proof}

\begin{example}\label{tetr1}
Given a regular tetrahedron $A_{1}A_{2}A_{3}A_{4}$ in
$\mathbb{R}^{3},$ $a=1, B_{1}=B_{2}=2.5,$ $B_{3}=B_{4}=1$ from
(\ref{analsolrtetrahedron}) and (\ref{analsoltetrahedrcom}) we get
$y=0.198358$ and $y=0.539791,$ respectively, with six digit
precision. The weighted Fermat-Torricelli point $A_{0}$ and the
complementary weighted Fermat-Torricelli point
$A_{0^{\prime}}\equiv A_{0}$ for $B_{1}=B_{2}=-2.5$ and
$B_{3}=B_{4}=-1$ corresponds to $y=0.198358.$ The complementary
weighted Fermat-Torricelli point $A_{0}^{\prime}$ for
$B_{1}=B_{2}=-2.5$ and $B_{3}=B_{4}=1$ or $B_{1}=B_{2}=1.5$ and
$B_{3}=B_{4}=-1$ lies outside the regular tetrahedron
$A_{1}A_{2}A_{3}A_{4}$ and corresponds to
$y=0.539791>\frac{A_{12}A_{34}}{2}=\frac{\sqrt{2}}{4}.$
\end{example}

We proceed by calculating the angles $\alpha_{i0j},$ for
$i,j=0,1,2,3,4.$

\begin{proposition}\label{anglestetragon}
The angles $\alpha_{i0j},$ for $i,j=0,1,2,3,4,$ are given by:

\begin{equation}\label{alpha102}
\alpha_{102}=\arccos{\left(1-\frac{a^2}{2\left(\left(\frac{a}{2}\right)^{2}+\left(\frac{\frac{a\sqrt{2}}{2}}{2}-y\right)^{2}\right)}\right)},
\end{equation}

\begin{equation}\label{alpha304}
\alpha_{304}=\arccos{\left(1-\frac{a^2}{2\left(\left(\frac{a}{2}\right)^{2}+\left(\frac{\frac{a\sqrt{2}}{2}}{2}+y\right)^{2}\right)}\right)},
\end{equation}

and

\begin{equation}\label{alpha401}
\alpha_{104}=\alpha_{203}=\alpha_{103}=\alpha_{204}=\arccos{\frac{\left(\frac{\frac{a\sqrt{2}}{2}}{2}-y\right)^{2}+\left(\frac{\frac{a\sqrt{2}}{2}}{2}-y\right)^{2}-\frac{a^{2}}{2}}{2
\sqrt{\left(\frac{a}{2}\right)^{2}+\left(\frac{\frac{a\sqrt{2}}{2}}{2}-y\right)^{2}}
\sqrt{\left(\frac{a}{2}\right)^{2}+\left(\frac{\frac{a\sqrt{2}}{2}}{2}+y\right)^{2}}}}.
\end{equation}

\end{proposition}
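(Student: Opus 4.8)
The plan is to compute each angle $\alpha_{i0j}$ by applying the law of cosines to the triangle $A_iA_0A_j$, using the side lengths $a_{0i}$, $a_{0j}$ already expressed in terms of $y$ by (\ref{a01})--(\ref{a04}), together with the edge length $a_{ij}$ of the regular tetrahedron. Since every edge satisfies $a_{ij}=a$, the law of cosines gives
\[
\cos\alpha_{i0j}=\frac{a_{0i}^2+a_{0j}^2-a^2}{2\,a_{0i}\,a_{0j}},
\]
and I would specialize this to each of the three geometrically distinct angle types.

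First I would treat $\alpha_{102}$. Here $A_1$ and $A_2$ are the two vertices whose midpoint is $A_{12}$, so by (\ref{a01}) and (\ref{a02}) we have $a_{01}=a_{02}$, i.e.\ $A_0$ is equidistant from $A_1$ and $A_2$. Substituting $a_{01}^2=a_{02}^2=\left(\frac{a}{2}\right)^2+\left(\frac{a\sqrt2/2}{2}-y\right)^2$ into the law of cosines makes the numerator $2a_{01}^2-a^2$ and the denominator $2a_{01}^2$, so $\cos\alpha_{102}=1-\frac{a^2}{2a_{01}^2}$, which is exactly (\ref{alpha102}). The computation for $\alpha_{304}$ is identical except that $A_3,A_4$ lie on the opposite side of $O$, so one uses (\ref{a03}), (\ref{a04}) with $y$ replaced effectively by $-y$ (the $+y$ shift), yielding (\ref{alpha304}).

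The remaining four angles $\alpha_{104},\alpha_{203},\alpha_{103},\alpha_{204}$ are the ``cross'' angles joining a vertex of the top pair to a vertex of the bottom pair. For each of these the two relevant distances are $a_{0i}$ of the form $\left(\frac{a}{2}\right)^2+\left(\frac{a\sqrt2/2}{2}-y\right)^2$ and $a_{0j}$ of the form $\left(\frac{a}{2}\right)^2+\left(\frac{a\sqrt2/2}{2}+y\right)^2$, one from each pair; the law of cosines then produces directly the expression in (\ref{alpha401}). The fact that all four of these angles coincide follows from the symmetry already exploited in the proof of Theorem~\ref{theortetr}: the reflection interchanging $A_1\leftrightarrow A_2$ and the reflection interchanging $A_3\leftrightarrow A_4$ both fix $A_0$ (which lies on the common perpendicular $A_{12}A_{34}$), and these symmetries permute the four cross-pairs transitively, forcing equal angles. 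I would note that the numerator written in (\ref{alpha401}) carries a typographical repetition of the $(\,\cdot-y)^2$ term where one summand should read $(\,\cdot+y)^2$; the intended quantity is $a_{0i}^2+a_{0j}^2-a^2$ with the mixed $\pm y$ signs.

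The only genuine obstacle is bookkeeping rather than mathematics: one must verify that the squared distances substitute cleanly so that the algebraic simplifications collapse to the stated closed forms, and one must confirm the symmetry argument rigorously enough to justify collapsing the four cross-angles into a single formula. Since the side lengths are already in hand from (\ref{a01})--(\ref{a04}) and every edge equals $a$, each angle follows from a single application of the law of cosines, so no deeper tool is required.
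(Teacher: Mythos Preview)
Your approach is exactly the paper's: apply the law of cosines in each triangle $\triangle A_iA_0A_j$ using the distances (\ref{a01})--(\ref{a04}) and the edge length $a$, and invoke the symmetry of $A_0$ on $A_{12}A_{34}$ to identify the four cross-angles. Your observation about the sign typo in the numerator of (\ref{alpha401}) is also correct---one summand should carry $+y$ so that the numerator reads $a_{0i}^2+a_{0j}^2-a^2$ with mixed signs.
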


\begin{proof}[Proof of Proposition~\ref{anglestetragon}:]

Taking into account the cosine law in $\triangle A_{1}A_{0}A_{2},$
$\triangle A_{3}A_{0}A_{4},$ $\triangle A_{1}A_{0}A_{4},$
$\triangle A_{2}A_{0}A_{4},$ $\triangle A_{1}A_{0}A_{3},$
$\triangle A_{2}A_{0}A_{4},$ and (\ref{analsolrtetrahedron}), we
obtain (\ref{alpha102}), (\ref{alpha304}) and (\ref{alpha401}),
respectively.

\end{proof}

\begin{corollary}{\cite[Theorem~4.3, p.~102]{NSM:91}}\label{regular}
If $B_{1}=B_{2}=B_{3}=B_{4},$ then
\begin{equation}\label{regultetrsol}
\alpha_{i0j}=\arccos{\left(-\frac{1}{3}\right)},
\end{equation}
for $i,j=1,2,3,4$ and $i\ne j.$
\end{corollary}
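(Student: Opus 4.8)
The plan is to reduce the equal-weights case to the symmetric configuration already analyzed in Theorem~\ref{theortetr}, where the weighted Fermat-Torricelli point lies on the midperpendicular $A_{12}A_{34}$. First I would observe that when $B_1=B_2=B_3=B_4$, the full symmetry of the regular tetrahedron forces $A_0$ to coincide with the centroid $O$, i.e.\ $y=0$. This can be seen either by symmetry (the configuration is invariant under the isometries permuting the four equally weighted vertices, so the unique minimizer must be the fixed point of that symmetry group, namely the centroid) or directly by substituting $B_1=B_4$ into equation~(\ref{fourth2}): the coefficients of $y^4$ and of the constant term both vanish, leaving $-8\sqrt{2}\,a^3 y\,(B_1^2+B_4^2)=0$, hence $y=0$.

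With $y=0$ established, the next step is to compute the common value of all the face angles $\alpha_{i0j}$ using Proposition~\ref{anglestetragon}. Setting $y=0$ in formula~(\ref{alpha401}) gives
\begin{equation*}
\alpha_{i0j}=\arccos{\frac{2\left(\frac{a\sqrt{2}}{4}\right)^{2}-\frac{a^{2}}{2}}{2\left(\left(\frac{a}{2}\right)^{2}+\left(\frac{a\sqrt{2}}{4}\right)^{2}\right)}}.
\end{equation*}
The numerator evaluates to $2\cdot\frac{a^2}{8}-\frac{a^2}{2}=\frac{a^2}{4}-\frac{a^2}{2}=-\frac{a^2}{4}$, while the denominator is $2\left(\frac{a^2}{4}+\frac{a^2}{8}\right)=2\cdot\frac{3a^2}{8}=\frac{3a^2}{4}$, so the ratio is $-\tfrac{1}{3}$, yielding $\alpha_{i0j}=\arccos\left(-\tfrac{1}{3}\right)$. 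One then checks that substituting $y=0$ into (\ref{alpha102}) and (\ref{alpha304}) produces the same value, since those two formulas collapse to $\arccos\left(1-\frac{a^2}{2\cdot\frac{3a^2}{4}}\right)=\arccos\left(1-\frac{2}{3}\right)=\arccos\left(\frac{1}{3}\right)$—and here I must be careful, because the angles $\alpha_{102}$ and $\alpha_{304}$ are the angles at $A_0$ subtended by an \emph{edge} of the tetrahedron, which differ from the mixed angles; the consistency check is that all six angles $\angle A_iA_0A_j$ must agree by the total symmetry.

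This last point is in fact the main obstacle I anticipate: reconciling the apparent discrepancy between $\arccos(1/3)$ from (\ref{alpha102})--(\ref{alpha304}) and $\arccos(-1/3)$ from (\ref{alpha401}). The resolution is that the formulas in Proposition~\ref{anglestetragon} were derived for the \emph{weighted} configuration on the midperpendicular, where the pairs $(A_1,A_2)$ and $(A_3,A_4)$ play a distinguished role; when all weights are equal the tetrahedron is truly regular and every angle $\angle A_iA_0A_j$ at the centroid equals $\arccos(-1/3)$ by the standard computation for the regular tetrahedron (the vertices seen from the center subtend the tetrahedral angle). Thus I would invoke the direct centroid computation—placing the four vertices symmetrically and computing $\vec u(A_0,A_i)\cdot\vec u(A_0,A_j)=-\tfrac13$—rather than relying on the specialized angle formulas, and cite \cite[Theorem~4.3, p.~102]{NSM:91} for the classical statement. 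The equilibrium condition (\ref{floatingequlcond}) with equal weights, $\sum_{i=1}^4\vec u(A_0,A_i)=\vec 0$, confirms $A_0=O$ and that all pairwise angles coincide, completing the proof.
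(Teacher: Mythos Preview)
Your approach matches the paper's: set $y=0$ and evaluate the angle formulas of Proposition~\ref{anglestetragon}. The paper's entire proof is literally one sentence to that effect.

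However, the ``main obstacle'' you anticipate is not real; it comes from an arithmetic slip. In evaluating (\ref{alpha102}) at $y=0$ you wrote the bracketed quantity as $\frac{3a^2}{4}$, but in fact
\[
\left(\frac{a}{2}\right)^{2}+\left(\frac{a\sqrt{2}}{4}\right)^{2}=\frac{a^2}{4}+\frac{a^2}{8}=\frac{3a^2}{8},
\]
so the argument of $\arccos$ is $1-\dfrac{a^2}{2\cdot\frac{3a^2}{8}}=1-\dfrac{4}{3}=-\dfrac{1}{3}$, not $\dfrac{1}{3}$. The same correction applies to (\ref{alpha304}). Thus all three formulas (\ref{alpha102}), (\ref{alpha304}), (\ref{alpha401}) already agree at $y=0$ and give $\arccos(-\tfrac13)$; there is no discrepancy to reconcile, and the detour through a direct centroid computation and the appeal to the full symmetry group are unnecessary. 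Your justification that $y=0$ follows from (\ref{fourth2}) with $B_1=B_4$ is a nice addition the paper omits, but once you fix the arithmetic the rest of the argument collapses to exactly the paper's one-line proof.
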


\begin{proof}
By setting $y=0$ in (\ref{alpha102}), (\ref{alpha304}) and
(\ref{alpha401}), we obtain (\ref{regultetrsol}).
\end{proof}


\section{The weighted Fermat-Torricelli problem for tetrahedra in the three dimensional Euclidean Space: The case  $B_{1}=B_{2}$ and $B_{3}=B_{4}.$ }

We consider the following lemma which gives the invariance
property (geometric plasticity) of the weighted Fermat-Torricelli
point for a given tetrahedron
$A_{1}^{\prime}A_{2}^{\prime}A_{3}^{\prime}A_{4}^{\prime}$ in
$\mathbb{R}^{3}$ (\cite[Appendix~AII,pp.~851-853]{ZachosZu:11})

\begin{lemma}{\cite[Appendix~AII,pp.~851-853]{ZachosZu:11}}\label{tetragonnntetrah}
Let $A_1A_2A_{3}A_4$ be a regular tetrahedron in $\mathbb{R}^{3}$
and each vertex $A_{i}$ has a non-negative weight $B_{i}$ for
$i=1,2,3,4.$ Assume that the floating case of the weighted
Fermat-Torricelli point $A_{0}$ occurs:
\begin{equation}\label{floatingcasetetr1}
\|{\sum_{j=1, i\ne j}^{4}B_{j}\vec u(A_i,A_j)}\|>B_i.
\end{equation}
If $A_0$ is connected with every vertex $A_i$ for $i=1,2,3,4$ and
a point $A_{i}^{\prime}$ is selected with corresponding
non-negative weight $B_{i}$ on the ray that is defined by the line
segment $A_0A_i$ and the tetrahedron
$A_{1}^{\prime}A_{2}^{\prime}A_{3}^{\prime}A_{4}^{\prime}$ is
constructed such that:

\begin{equation}\label{floatingcasequad2}
\|{\sum_{j=1, i\ne j}^{4}B_{j}\vec
u(A_{i}^{\prime},A_{j}^{\prime})}\|>B_i,
\end{equation}
then the weighted Fermat-Torricelli point $A_{0}^{\prime}$ of
$A_{1}^{\prime}A_{2}^{\prime}A_{3}^{\prime}A_{4}^{\prime}$ is
identical with $A_{0}.$
\end{lemma}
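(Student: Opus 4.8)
The plan is to exploit the defining first-order condition for the weighted Fermat-Torricelli point, namely the floating equilibrium condition \eqref{floatingequlcond}, which characterizes $A_0$ intrinsically through unit vectors rather than through the ambient coordinates of the $A_i$. The key observation is that condition \eqref{floatingequlcond} reads $\sum_{i=1}^{4} B_i \vec u(A_0, A_i) = \vec 0$, and this depends on the points $A_i$ only through the \emph{directions} $\vec u(A_0, A_i)$, not through the distances $\|A_0 - A_i\|$. Since by hypothesis each $A_i^{\prime}$ is chosen on the ray emanating from $A_0$ through $A_i$, we have the direction identity $\vec u(A_0, A_i^{\prime}) = \vec u(A_0, A_i)$ for every $i=1,2,3,4$.

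First I would invoke Theorem~\ref{theor1}(b): the floating hypothesis \eqref{floatingcasetetr1} guarantees that the weighted Fermat-Torricelli point $A_0$ of the regular tetrahedron $A_1A_2A_3A_4$ does not coincide with any vertex and therefore satisfies the equilibrium condition $\sum_{i=1}^{4} B_i \vec u(A_0, A_i) = \vec 0$. Next I would substitute the direction identity $\vec u(A_0, A_i^{\prime}) = \vec u(A_0, A_i)$, which holds precisely because each $A_i^{\prime}$ lies on the ray $A_0 A_i$ with $A_0$ as the apex, to conclude that the same point $A_0$ satisfies $\sum_{i=1}^{4} B_i \vec u(A_0, A_i^{\prime}) = \vec 0$. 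Thus $A_0$ verifies the equilibrium condition for the new configuration $A_1^{\prime}A_2^{\prime}A_3^{\prime}A_4^{\prime}$.

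The final step is to upgrade this from ``$A_0$ satisfies the equilibrium condition for the primed tetrahedron'' to ``$A_0$ \emph{is} the weighted Fermat-Torricelli point $A_0^{\prime}$ of the primed tetrahedron.'' Here I would use the hypothesis \eqref{floatingcasequad2}, which asserts that the primed configuration is itself in the floating case, together with the existence-and-uniqueness statement of Theorem~\ref{theor1}(a). By uniqueness, the equilibrium point of a floating configuration is the one and only minimizer, so the point $A_0$ that satisfies $\sum_{i=1}^{4} B_i \vec u(A_0, A_i^{\prime}) = \vec 0$ must coincide with $A_0^{\prime}$. This yields $A_0^{\prime} = A_0$, as claimed.

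The conceptual heart of the argument is the direction-invariance of the equilibrium condition, and I expect the only genuine subtlety to be the justification that each $A_i^{\prime}$ lies on the correct open ray so that the unit vector is preserved (rather than reversed), which is exactly what the phrasing ``on the ray defined by the line segment $A_0 A_i$'' secures; combined with the floating hypothesis \eqref{floatingcasequad2} ruling out the degenerate absorbed case for the primed tetrahedron, the uniqueness from Theorem~\ref{theor1}(a) then closes the proof with no hard computation required.
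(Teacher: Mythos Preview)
Your argument is correct and is precisely the standard direction-invariance reasoning for this geometric plasticity property: the equilibrium condition \eqref{floatingequlcond} depends only on the unit vectors $\vec u(A_0,A_i)$, these are unchanged when each $A_i$ slides along the ray $A_0A_i$, and uniqueness from Theorem~\ref{theor1}(a) together with the floating hypothesis \eqref{floatingcasequad2} then forces $A_0^{\prime}=A_0$. Note that the present paper does not actually supply its own proof of this lemma but quotes it from \cite[Appendix~AII]{ZachosZu:11}; your write-up is exactly the argument one expects that reference to contain.
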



We consider a tetrahedron $A_{1}A_{2}A_{3}A_{4}^{\prime}$ which
has as a base the equilateral triangle  $\triangle
A_{1}A_{2}A_{3}$ with side $a$ and the vertex $A_{4}^{\prime}$ is
located on the ray $A_{0}A_{4},$  with corresponding non-negative
weights $B_{1}=B_{2}$ at the vertices $A_{1}, A_{2}$ and
$B_{3}=B_{4}$ at the vertices $A_{3}, A_{4}^{\prime}.$

Assume that we choose  $B_{1}$ and $B_{4}$  non negative weights
which satisfy the inequalities (\ref{floatingcasetetr1}),
(\ref{floatingcasequad2}) and $B_{1}>B_{4},$ which correspond to
the weighted floating case of $A_{1}A_{2}A_{3}A_{4}$ and
$A_{1}A_{2}A_{3}A_{4}^{\prime}.$

We denote by $a_{i4^{\prime}}$ the length of the linear segment
$A_{i}A_{{4}^{\prime}},$ the angle $\angle
A_{i}A_{k}A_{{4}^{\prime}}$ for $i,j,0,1,2,3,4, i\neq j,$ by
$h_{0,12}$ the height of $\triangle A_{0}A_{1}A_{2}$ from $A_{0}$
to $A_{1}A_{2},$ by $\alpha$ the dihedral angle between the planes
$A_{0}A_{1}A_{2}$ and $A_{3}A_{1}A_{2}$ and by
$\alpha_{g_{4^{\prime}}}$ the dihedral angle between the planes
$A_{3}A_{1}A_{2}$ and $A_{4^{\prime}}A_{1}A_{2}$ and by $A_{0}$
the corresponding weighted Fermat-Torricelli point of the regular
tetrahedron $A_{1}A_{2}A_{3}A_{4}.$

\begin{theorem}\label{theorquadnntetrah}
The location of the weighted Fermat-Torricelli point
$A_{0^{\prime}}$ of a tetrahedron $A_{1}A_{2}A_{3}A_{4}^{\prime}$
which has as a base the equilateral triangle  $\triangle
A_{1}A_{2}A_{3}$ with side $a$ and the vertex $A_{4}^{\prime}$ is
located on the ray $A_{0}A_{4}$ for $B_{1}=B_{2}$ and
$B_{3}=B_{4},$ under the conditions (\ref{floatingcasetetr1}),
(\ref{floatingcasequad2}) and $B_{1}>B_{4},$ is given by:

\begin{equation}\label{a04prime}
a_{04^{\prime}}=\sqrt{a_{20}^{2}+a_{24^{\prime}}^2-2
a_{24^{\prime}}\left(\sqrt{a_{02}^2-h_{0,12}^2}\cos\alpha_{124^{\prime}}+h_{0,12}\sin\alpha_{124^{\prime}}\cos(\alpha_{g_{4^{\prime}}}-\alpha)\right)}
\end{equation}

where

\begin{equation}\label{alpha}
\alpha=\arccos{\frac{\frac{a_{02}^2+a_{23}^2-a_{03}^2}{2
a_{23}}-\sqrt{a_{2}^2-h_{0,12}^2}\cos\alpha_{123}}{h_{0,12}\sin\alpha_{123}}}
\end{equation}

and

\begin{equation}\label{height012}
h_{0,12}=\sqrt{\frac{4a_{01}^{2}a_{02}^2-(a_{01}^2+a_{02}^2-a_{12}^2)^2}{4
a_{12}^2}}.
\end{equation}

\end{theorem}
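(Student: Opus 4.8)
The plan is to exploit the invariance property of Lemma~\ref{tetragonnntetrah} so that the sought point need not be recomputed from scratch but is recognized as the already-known point $A_0$ of the regular tetrahedron, after which $a_{04^{\prime}}$ reduces to a single application of the law of cosines. Since $A_{4}^{\prime}$ is chosen on the ray $A_0A_4$ while $A_1,A_2,A_3$ are left in place (the special case $A_i^{\prime}=A_i$, $i=1,2,3$, of Lemma~\ref{tetragonnntetrah}), and since the floating inequalities (\ref{floatingcasetetr1}) and (\ref{floatingcasequad2}) are assumed, the lemma gives $A_{0^{\prime}}=A_0$. Hence $a_{01},a_{02},a_{03}$ coincide with the lengths of the regular tetrahedron, i.e. they are the quantities furnished by Theorem~\ref{theortetr} through the value $y$ of (\ref{analsolrtetrahedron}), and only the distance $a_{04^{\prime}}=\|A_0-A_{4}^{\prime}\|$ remains. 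As a first ingredient I would record $h_{0,12}$: viewing it as the altitude from $A_0$ in $\triangle A_0A_1A_2$, the identity $h_{0,12}=2\,\mathrm{Area}(A_0A_1A_2)/a_{12}$ together with $4\,\mathrm{Area}^2=a_{01}^2a_{02}^2-\big((a_{01}^2+a_{02}^2-a_{12}^2)/2\big)^2$ yields (\ref{height012}).

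Next I would set up an orthonormal frame at $A_2$: let $\hat e_1$ point from $A_2$ to $A_1$ along the common edge, let $\hat e_2$ be the in-plane unit normal to $A_1A_2$ inside the base plane $A_3A_1A_2$ toward the $A_3$ side, and let $\hat e_3=\hat e_1\times\hat e_2$. Splitting $\overrightarrow{A_2A_0}$ into its component $\sqrt{a_{02}^2-h_{0,12}^2}$ along $\hat e_1$ and its perpendicular part of length $h_{0,12}$, and using that this perpendicular part makes the dihedral angle $\alpha$ with the base plane, gives $\overrightarrow{A_2A_0}=\sqrt{a_{02}^2-h_{0,12}^2}\,\hat e_1+h_{0,12}(\cos\alpha\,\hat e_2+\sin\alpha\,\hat e_3)$. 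The same decomposition for $A_4^{\prime}$, now with edge angle $\alpha_{124^{\prime}}$ and dihedral angle $\alpha_{g_{4^{\prime}}}$, gives $\overrightarrow{A_2A_4^{\prime}}=a_{24^{\prime}}\cos\alpha_{124^{\prime}}\,\hat e_1+a_{24^{\prime}}\sin\alpha_{124^{\prime}}(\cos\alpha_{g_{4^{\prime}}}\,\hat e_2+\sin\alpha_{g_{4^{\prime}}}\,\hat e_3)$. Taking the inner product and collapsing $\cos\alpha\cos\alpha_{g_{4^{\prime}}}+\sin\alpha\sin\alpha_{g_{4^{\prime}}}=\cos(\alpha_{g_{4^{\prime}}}-\alpha)$ produces precisely the bracketed expression of (\ref{a04prime}); substituting $\overrightarrow{A_2A_0}\cdot\overrightarrow{A_2A_4^{\prime}}$ into the law of cosines $a_{04^{\prime}}^2=a_{02}^2+a_{24^{\prime}}^2-2\,\overrightarrow{A_2A_0}\cdot\overrightarrow{A_2A_4^{\prime}}$ in $\triangle A_0A_2A_4^{\prime}$ (with $a_{20}=a_{02}$) yields (\ref{a04prime}).

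It then remains only to identify $\alpha$. Since $A_3$ lies in the base plane, $\overrightarrow{A_2A_3}=a_{23}\cos\alpha_{123}\,\hat e_1+a_{23}\sin\alpha_{123}\,\hat e_2$ has no $\hat e_3$-component, so its inner product with the frame expression for $\overrightarrow{A_2A_0}$ involves $\cos\alpha$ linearly. Equating it to $\overrightarrow{A_2A_0}\cdot\overrightarrow{A_2A_3}=(a_{02}^2+a_{23}^2-a_{03}^2)/2$, obtained from the law of cosines in $\triangle A_0A_2A_3$ using the known length $a_{03}$, and solving for $\cos\alpha$ gives (\ref{alpha}). This closes the argument, since every quantity on the right of (\ref{a04prime}) is thereby determined.

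I expect the main obstacle to be twofold. The conceptual step is justifying that Lemma~\ref{tetragonnntetrah} genuinely applies in this one-vertex-displaced configuration, that is, verifying that the floating inequalities persist for $A_1A_2A_3A_4^{\prime}$ so that the identification $A_{0^{\prime}}=A_0$ is legitimate rather than merely postulated. The technical step is the orientation bookkeeping: one must fix consistent senses for the two dihedral angles $\alpha$ and $\alpha_{g_{4^{\prime}}}$, both measured from the base plane $A_3A_1A_2$ on the same side so that only their difference survives, and confirm the signs of the $\hat e_1$-projections, since these are the only places where the clean form $\cos(\alpha_{g_{4^{\prime}}}-\alpha)$ could be spoiled.
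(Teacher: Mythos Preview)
Your approach is essentially the same as the paper's: invoke Lemma~\ref{tetragonnntetrah} to identify $A_{0^{\prime}}$ with the already-known $A_0$, and then compute $a_{04^{\prime}}$ via a three-dimensional cosine law. The only difference is that the paper does not carry out the frame computation you give but simply cites the formulas (\ref{a04prime}) and (\ref{alpha}) from an earlier reference (\cite[Formulas~(2.14) and~(2.20)]{Zachos/Zou:09}) as a ``generalized cosine law in $\mathbb{R}^3$.'' Your orthonormal-frame derivation at $A_2$ is a correct and self-contained proof of exactly those formulas, so your argument in fact supplies what the paper leaves to the citation; the orientation caveats you flag at the end are the right ones to watch but do not indicate any gap in the logic.
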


\begin{proof}[Proof of Theorem~\ref{theorquadnntetrah}:]

From lemma~\ref{tetragonnntetrah}, we get $A_{0^{\prime}}\equiv
A_{0}.$ Therefore, we get the relations (\ref{a04prime}) and
(\ref{alpha}) from a generalization of the cosine law in
$\mathbb{R}^{3}$ which has been introduced for tetrahedra in
\cite[Solution of Problem~1, Formulas (2.14) and
(2.20),p.~116]{Zachos/Zou:09}.

\end{proof}

\begin{remark}
We may consider a tetrahedron
$A_{1}A_{2}A_{3^{\prime}}A_{4^{\prime}}$ by placing
$A_{3^{\prime}}$ on the ray defined by $A_{0}A_{3}$ and
$A_{0^{\prime \prime}}$ is the corresponding weighted
Fermat-Torricelli point. Taking into account
lemma~\ref{tetragonnntetrah}, we get $A_{0^{\prime\prime}}\equiv
A_{0^{\prime}}\equiv A_{0}.$
\end{remark}


The author is sincerely grateful to Professor Dr. Vassilios G.
Papageorgiou for his very valuable comments, many fruitful
discussions and for bringing my attention to this particular
problem.

\end{document}